\newcommand\eqref[1]{(\ref{#1})}
\begin{document}

\title{Eigenvalues for Two-Lag Linear Delay Differential Equations%
\thanks{Approved for Public Release, Distribution Unlimited.%
}}

\author{D. M. Bortz\thanks{Mathematical Biology Group, Applied Mathematics, University of Colorado, Boulder, CO 80309-0526}}
\maketitle
\begin{abstract}
We consider the class of two-lag linear delay differential equations
and develop a series expansion to solve for the roots of the nonlinear
characteristic equation. Supporting numerical results are presented
along with application of our method to study the stability of a two-lag
model from ecology.\end{abstract}
\begin{keywords}
Lambert W, Delay Differential Equations, Exponential Polynomials
\end{keywords}

\section{\label{sec:Introduction}Introduction}

Delay Differential Equations (DDEs) naturally arise in many fields
of science and engineering%
\footnote{See Erneux \cite{Erneux2009} for a good overview of different application
areas.%
} and accordingly have been the focus of extensive study over the years.
The classic books by Bellman and Cooke \cite{Bellman1963} and El'sgol'ts
and Norkin \cite{Elsgolts1973} describe the significant mathematical
results up to the 1960's, while subsequent books by Hale and Verduyn
Lunel \cite{Hale1993} and Diekmann et al., \cite{Diekmann1995} cover
the later semigroup-based frameworks. A specific area in which delays
arise frequently is biological modeling, e.g., due to gestation periods,
and there are several good texts on population modeling and analysis
with DDEs \cite{Cushing1977,Gopalsamy1992,Kuang1993,MacDonald1989}
including a recent introductory textbook by Smith \cite{Smith2011a}.
Lastly, we direct the interested reader to several recent review articles
\cite{Gu2003a,Richard2003} and edited works \cite{Balachandran2009,Loiseau2009,Sipahi2012a}
which thoroughly cover the state-of-the-art advances in the study
and application of DDEs.

\subsection{DDE Stability Analyses}

Consider the first order, linear, single-lag DDE
\begin{eqnarray}
\dot{x}(t) & = & \alpha x(t)+\beta x(t-\tau)\,;\, t>0\,,\label{eq:DDE eqn}\\
x(t) & = & \phi(t)\,;\, t\leq0\,,\label{eq:DDE hist}
\end{eqnarray}
where $\phi(t)$ is the initial history function (which need not satisfy
(\ref{eq:DDE eqn})). The Laplace transform of (\ref{eq:DDE eqn})
is 
\begin{equation}
s=\alpha+\beta e^{-s\tau}\,,\label{eq:nonline eval eqn 1}
\end{equation}
a transcendental equation with an infinite number of roots $\{s_{j}\}$
in the complex plane (see Figure \ref{fig:computed eval roots} for
an example). Thus the solutions are constructed as an infinite series
\begin{eqnarray*}
y(t) & = & \sum_{j=-\infty}^{\infty}C_{j}e^{s_{j}t}\,,
\end{eqnarray*}
and we note that these basis functions $e^{s_{j}t}$ form a Schauder
basis for $L^{p}$ functions over a finite domain. The coefficients
$C_{j}$ can be computed via evaluating the history function $\phi(t)$
and the basis functions $e^{s_{j}t}$ at $2N+1$ timepoints in $[-\tau,0]$
and then linearly solving for $2N+1$ of the $C_{j}$'s.

The nonlinear eigenvalue equation in (\ref{eq:nonline eval eqn 1})
belongs to a well-known class of equations known as \emph{exponential
polynomials} \cite{Bell1934,Moreno1973,Ritt1929} and many researchers
in DDEs have (understandably) chosen to study them \cite{Avellar1980,Cooke1986}.
In the broader mathematics community, research into this class of
algebraic equations is very active and extends to numerical schemes
for computing the roots (see papers by Jarlebring such as \cite{Jarlebring2009})
and applications to fields afar as quantum computing \cite{Sasaki2009}.

A primary goal in the study of linear (and linearized) DDEs is to
identify the value of the the delay $\tau$ for which equilibria become
unstable, i.e., when the real part of the principal root $\mbox{Re}(s_{0})$
becomes positive. A traditional strategy for proving that such a $\tau$
exists is to separate (\ref{eq:nonline eval eqn 1}) into real and
imaginary equations and prove for the $\tau_{*}$ such that $s_{0}(\tau_{*})$
is purely imaginary, the eigenvalue passes through to the positive
real half of the plane, i.e., 
\[
\left.\frac{d}{d\tau}\mbox{Re}(s_{0}(\tau))\right|_{\tau=\tau_{*}}>0\,.
\]
 Many researchers have been very successful in establishing stability
results using this framework \cite{Forde2004}, even for system with
distributed delays \cite{Cooke1996,Yuan2011}, multiple delays \cite{Braddock1983,Li1999,Ruan2003},
and fractional order derivatives \cite{Deng2006}.

\subsection{Relationship between DDEs and Lambert W function}

In their 2003 article, Asl and Ulsoy \cite{Asl2003} connected the
roots of (\ref{eq:nonline eval eqn 1}) with solutions to a specific
exponential polynomial known as the \emph{Lambert W} equation \cite{Corless1996,Corless1997}.
Briefly: to solve for the roots to (\ref{eq:nonline eval eqn 1}),
rewrite the equation in the form 
\[
\tau(s-\alpha)e^{\tau(s-\alpha)}=\beta\tau e^{-\alpha\tau}\,,
\]
and define a new function $W$ such that $W(\beta\tau e^{-\alpha\tau})=\tau(s-\alpha)$.
The equation to be solved is now 
\begin{equation}
W(\beta\tau e^{-\alpha\tau})e^{W(\beta\tau e^{-\alpha\tau})}=\beta\tau e^{-\alpha\tau}\,,\label{eq:nonlin eval eqn with W}
\end{equation}
which is in the same form as $W(x)e^{W(x)}=x\,,$ the well-studied
Lambert W equation. The roots to (\ref{eq:nonline eval eqn 1}) are
thus 
\[
s=\frac{1}{\tau}W(\beta\tau e^{-\alpha\tau})+\alpha\,.
\]

Caratheodory provided the principal solution to the Lambert W function
\[
W_{0}(x)=\sum_{n=1}^{\infty}\frac{(-n)^{n-1}}{n!}x^{n}\,,
\]
and all other branches have been computed as well
\begin{equation}
W_{j}(x)=\ln(x)+2\pi ij-\ln(\ln(x)+2\pi ij)+\sum_{l=0}^{\infty}\sum_{m=1}^{\infty}C_{lm}\frac{(\ln(\ln(x)+2\pi ij))^{m}}{(\ln(x)+2\pi ikj)^{l+m}}\,,\label{eq:wk series}
\end{equation}
with coefficients 
\[
C_{lm}=\frac{1}{m!}(-1)^{l}{l+m \brack l+1}\,.
\]
expressed in terms of the unsigned Stirling numbers of the first kind
\cite{Abramowitz1972}. This expansion to compute the different roots
for the Lambert W function was presented by de~Bruijn \cite{Bruijn1958},
with Comtet \cite{Comtet1974} rewriting de~Bruijn's expansion in
terms of the Stirling numbers and Corless et al.~\cite{Corless1996}
correcting a typographic error in equation (2.4.4) in Comtet \cite{Comtet1974}.
Lastly, we note the \textbf{lambertw }function in Matlab, \textbf{LambertW
}in Maple, and \textbf{ProductLog }in Mathematica allow for the numerically
accurate computation of $W_{j}(x)$ for any branch $j$.

Since Asl and Ulsoy's original article, the connection between DDE
eigenvalues and solutions to Lambert W has been substantially extended
and expanded \cite{Chen2002,Chen2002a,Chen2003,Cheng2006,Cogan2011,Hwang2005,Hwang2005a,Jarlebring2007,Shinozaki2006,Wang2008,Yi2007,Yi2008,Yi2006,Yi2006b},
culminating in a monograph \cite{Yi2010}. These results, however,
do not extend to multi-lag DDEs as (to the best of our knowledge)
the nonlinear eigenvalue problem can be cast as a Lambert W equation
\emph{only} when the DDE has a \emph{single} lag.

In this work, we develop a series expansion for the roots to the nonlinear
eigenvalue problem corresponding to a \emph{two}-lag DDE. While we
were inspired by the original derivation of (\ref{eq:wk series}),
the development presented below is a bit more involved than the Lambert
W derivation in de Bruijn \cite{Bruijn1958} partially due to our
goal of making the truncated sums computationally efficient to evaluate.

Section \ref{sec:Two-Lag} presents the main result of the paper,
deriving the formula for the eigenvalues. Section \ref{sec:Numerical-Results}
provides numerical results supporting our claims of accuracy and convergence
and including an application to a two-lag model from ecology. Lastly,
Section \ref{sec:Conclusion-and-Discussion} summarizes the results
of the work and discusses the natural next steps for future work.

\section{\label{sec:Two-Lag}Two-Lag Development}

In this section, we develop our series expansion for the roots to
the nonlinear characteristic equation arising from the two-lag DDE.
Note that for the multi-delay case, the conventional approach focuses
on developing bounds for the roots (see \cite[\S 12]{Bellman1963}
for a good summary and overview), whereas here, we are explicitly
computing the roots.

Consider a DDE with two lags $\tau_{1}$ and $\tau_{2}$
\[
\dot{x}(t)=\alpha x(t)+\beta x(t-\tau_{1})+\gamma x(t-\tau_{2})\,,
\]
 with corresponding coefficients $\alpha$, $\beta$, and $\gamma$.
We will rescale and rearrange terms, defining several new variables
and parameters in order to reduce the equation to its essential mathematical
features. We rescale time by the first lag $\mathfrak{t}=t/\tau_{1}$,
the parameters so that $\alpha_{1}=\alpha\tau_{1}$, $\beta_{1}=\beta\tau_{1}$,
$\gamma_{1}=\gamma\tau_{1}$ , $\tau=\nicefrac{\tau_{1}}{\tau_{2}}$,
and denote $x'(\mathfrak{t})=\frac{d}{d\mathfrak{t}}x(\mathfrak{t})=\tau_{1}\frac{d}{dt}x(t)$
to obtain the equation 
\[
x'(\mathfrak{t})=\alpha_{1}x(\mathfrak{t})+\beta_{1}x(\mathfrak{t}-1)+\gamma_{1}x(\mathfrak{t}-\tau)\,.
\]
 After a Laplace transform, the corresponding nonlinear characteristic
equation for $s\in\mathbb{C}$ is thus
\begin{equation}
s=\alpha_{1}+\beta_{1}e^{-s}+\gamma_{1}e^{-s\tau}\,.\label{eq:schar}
\end{equation}
We then let $\lambda=s-\alpha_{1}$, $\beta_{2}=\beta_{1}e^{-\alpha_{1}}$,
$\gamma_{2}=\gamma_{1}e^{-\alpha_{1}\tau}$, and multiply through
by $e^{\lambda\tau}$, yielding the much cleaner equation
\begin{equation}
\lambda e^{\lambda\tau}=\beta_{2}e^{(\tau-1)\lambda}+\gamma_{2}\,.\label{eq:lamchar}
\end{equation}

Inspired by the original expansion in \cite{Bruijn1958}, we note
that for $\left|\gamma_{2}\right|$ near to or far from zero, we can
write $\lambda$ as a small perturbation $u$ of $\ln\gamma_{2}$
\[
\lambda=(\ln\gamma_{2}+u)/\tau\,,
\]
 and then safely assume that
\begin{equation}
\left|u\right|\ll\left|\ln\gamma_{2}\right|\,.\label{eq:assump 1}
\end{equation}
Substituting this form of $\lambda$ into equation (\ref{eq:lamchar})
we find that
\begin{equation}
\frac{1}{\tau}\left(1+\frac{u}{\ln\gamma_{2}}\right)\gamma_{2}e^{u}=\frac{\beta_{2}\gamma_{2}^{\left(\frac{\tau-1}{\tau}\right)}}{\ln\gamma_{2}}\left(e^{u}\right)^{\left(\frac{\tau-1}{\tau}\right)}+\frac{\gamma_{2}}{\ln\gamma_{2}}\,.\label{eq:lamchar2}
\end{equation}
To proceed, we will also assume that 
\begin{equation}
\left|\frac{\beta_{2}}{\gamma_{2}^{1/\tau}\ln\gamma_{2}}\right|\ll1\,,\label{eq:assump 2}
\end{equation}
in addition to (\ref{eq:assump 1}). This restriction on the coefficients
is a result of the chosen rearrangement of (\ref{eq:schar}) into
(\ref{eq:lamchar}). There is nothing particularly special about (\ref{eq:assump 2})
and choosing to recast (\ref{eq:schar}) differently would simply
necessitate different assumptions on the parameters in order for an
expansion to converge. In our experience, this is a reasonable assumption
encountered in several models (see the examples in Section \ref{sec:Numerical-Results}).

Moving forward, we can solve (\ref{eq:lamchar2}) for $u$ as 
\[
u\approx\ln\tau+\ln(\nicefrac{1}{\ln\gamma_{2}})
\]
which means that 
\begin{equation}
u=\ln\tau+\ln(\nicefrac{1}{\ln\gamma_{2}})+v\label{eq:lnblah v}
\end{equation}
where our attention now focuses on solving for a new variable $v$.
As discussed by Corless et al.~\cite{Corless1996}, the nested logarithms
in (\ref{eq:lnblah v}) need not select the same branch. As Corless
et al.~do, we will let the outer logarithm be the principal branch
and denote $\mbox{Ln}$ as the inner logarithm for which we have not
yet chosen a branch, i.e., 
\[
u=\ln\tau+\ln(\nicefrac{1}{\mbox{Ln}\gamma_{2}})+v\,.
\]
By substituting $u$ back into $\eqref{eq:lamchar2},$ we can cancel
several terms and simplify our problem to one of finding the roots
$v$ to the equation
\[
1+\frac{\ln\tau}{\mbox{Ln}\gamma_{2}}+\frac{\ln(\nicefrac{1}{\mbox{Ln}\gamma_{2}})}{\mbox{Ln}\gamma_{2}}+\frac{1}{\mbox{Ln}\gamma_{2}}v=\frac{\beta_{2}}{\gamma_{2}}\left(\frac{\gamma_{2}\tau}{\mbox{Ln}\gamma_{2}}\right)^{\left(\frac{\tau-1}{\tau}\right)}e^{-\nicefrac{v}{\tau}}+e^{-v}
\]
If we let
\[
\sigma=\frac{1}{\mbox{Ln}\gamma_{2}}\,,\, c=\left(\frac{\beta_{2}}{\gamma_{2}}\right)\left(\frac{\gamma_{2}\tau}{\mbox{Ln}\gamma_{2}}\right)^{\left(\frac{\tau-1}{\tau}\right)}\,,\,\mu=\frac{\ln\tau\ln(\nicefrac{1}{\mbox{Ln}\gamma_{2}})}{\mbox{Ln}\gamma_{2}}-c\,,
\]
then our efforts will focus on identifying the roots of 
\begin{equation}
f(v)\equiv e^{-v}+ce^{-v/\tau}-\sigma v-1-c-\mu\,.\label{eq:f}
\end{equation}
Note that the motivation for adding and subtracting $c$ will become
self-evident in the proof for the following lemma.
\begin{lemma}
If there exists positive numbers $a$ and $b$ such that $\left|\sigma\right|<a$
and $\left|\mu\right|<a$ then (\ref{eq:f}) has a single root inside
the region $\left|v\right|\leq b$.\end{lemma}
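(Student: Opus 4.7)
The plan is to apply Rouch\'e's theorem to a natural splitting of $f$. Define $g(v) := e^{-v} + c e^{-v/\tau} - 1 - c$ and $h(v) := -\sigma v - \mu$, so that $f = g + h$. The purpose of adding and subtracting $c$ in the definition of $f$ now becomes transparent: it ensures $g(0) = 0$, so $g$ has an obvious zero at the origin around which we can center the Rouch\'e argument.

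I would first verify that $g$ has a \emph{simple} zero at $v=0$ and no other zeros in a small disk. Differentiating gives $g'(0) = -(1 + c/\tau)$, which is nonzero under the mild genericity assumption $\tau + c \neq 0$. By analyticity, the Taylor expansion
\[
g(v) \;=\; -(1 + c/\tau)\,v \;+\; O(v^2)
\]
yields a lower bound $|g(v)| \ge \tfrac{1}{2}\,|1 + c/\tau|\, b$ on the circle $|v|=b$, provided $b$ is small enough that the quadratic remainder is dominated by the linear term. On the other hand, $h$ is bounded by $|h(v)| \le |\sigma| b + |\mu| < a(1+b)$ on the same circle.

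With these two estimates in hand, the final step is to choose $a$ small enough (depending on $b$ and on $|1+c/\tau|$) that $a(1+b) < \tfrac{1}{2}\,|1+c/\tau|\,b$, which forces $|h(v)| < |g(v)|$ on $|v|=b$. Rouch\'e's theorem then implies that $f = g + h$ has the same number of zeros in $\{|v| \le b\}$ as $g$, namely exactly one. This is where the quantifier structure of the lemma enters: one first shrinks $b$ so that $g$ has only its origin-zero in the disk and the quadratic tail of $g$ is controlled, and then shrinks $a$ accordingly.

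The main technical obstacle is obtaining the effective lower bound for $|g(v)|$ on $|v|=b$: one needs a uniform estimate on the $O(v^2)$ remainder in the Taylor expansion of $g$ with explicit constants depending on $c$ and $\tau$. This reduces to the standard Lagrange remainder bound for the exponential on a disk of radius $b$, and once it is in place the rest of the argument is a matter of bookkeeping in the choice of $a = a(b,c,\tau)$.
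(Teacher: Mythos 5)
Your proposal is correct and follows essentially the same route as the paper: the identical splitting $f=g+h$ with $g(v)=e^{-v}+ce^{-v/\tau}-1-c$ and $h(v)=-\sigma v-\mu$, followed by Rouch\'e's theorem on a circle $\left|v\right|=b$. The only real difference is how $\left|g\right|$ is bounded below on that circle: the paper fixes the explicit radius $b=\pi\min\{1,\tau\}$ and uses the separate lower bounds $\delta,\delta_{\tau}$ of the two exponential pieces, taking $a=\min\{\delta,\delta_{\tau}\}/(2(\pi+1))$, whereas you shrink $b$ and exploit the simple zero of $g$ at the origin via $g'(0)=-(1+c/\tau)$, which introduces the nondegeneracy condition $1+c/\tau\neq0$ --- an assumption the paper does not state but implicitly needs anyway, since $-(1+c\tau^{-1})$ appears raised to negative powers in the coefficient formula (\ref{eq:hmk}).
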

\begin{proof}
Let $\delta$ denote the lower bound of $\left|e^{-v}-1\right|$ on
$\left|v\right|=b$ and $\delta_{\tau}$ denote the lower bound of
$\left|ce^{-v/\tau}-c\right|$ on $\left|v\right|=b$. We can arbitrarily
choose $b=\pi\min\{1,\tau\}$ since $e^{-v}-1$ has only one root
at zero for $b\in(0,2\pi)$ and $c(e^{-v/\tau}-1)$ also has only
one root at $v=0$ for $b\in(0,2\pi\tau)$. Thus there is only one
root at $v=0$ for $e^{-v}-1+c(e^{-v/\tau}-1)$ in the region $\left|v\right|\leq\pi\min\{1,\tau\}$
and that on $\left|v\right|=\pi\min\{1,\tau\}$ we have a lower bound
of $\min\{\delta,\delta_{\tau}\}$.

Let $a=\min\{\delta,\delta_{\tau}\}/(2(\pi+1))$ and then for $\left|\sigma\right|<a$,
$\left|\mu\right|<a$, and $\left|v\right|=\pi\min\{1,\tau\}$ we
have that 
\[
\left|\sigma v+\mu\right|\leq\left|\sigma\right|\left|v\right|+\left|\mu\right|\leq\frac{\min\{\delta,\delta_{\tau}\}}{2(\pi+1)}\cdot\pi+\frac{\min\{\delta,\delta_{\tau}\}}{2(\pi+1)}=\frac{\min\{\delta,\delta_{\tau}\}}{2}<\min\{\delta,\delta_{\tau}\}\,.
\]
Since $\left|e^{-v}-1+ce^{-v/\tau}-c\right|>\min\{\delta,\delta_{\tau}\}$
on the curve defined by $\left|v\right|=\pi\min\{1,\tau\}$, we have
satisfied the hypotheses of Rouch\'e's Theorem \cite{Brown1996}
and can therefore conclude that there is only one root to (\ref{eq:f})
inside $\left|v\right|=\pi\min\{1,\tau\}$.
\end{proof}

By Cauchy's theorem, we can then compute $v$ using the well-known
formula from complex analysis
\begin{equation}
v=\frac{1}{2\pi i}\oint_{\left|\zeta\right|=b}\frac{f'(\zeta)\zeta}{f(\zeta)}\mbox{d}\zeta=\frac{1}{2\pi i}\oint_{\left|\zeta\right|=b}\frac{(-e^{-\zeta}-ce^{-\zeta/\tau}/\tau-\sigma)\zeta}{e^{-\zeta}+ce^{-\zeta/\tau}-\sigma\zeta-1-c-\mu}\mbox{d}\zeta\,,\label{eq:contourintegral}
\end{equation}
 for $b=\pi\min\{1,\tau\}$. The $1/f(\zeta)$ can be expanded into
the following (absolutely and uniformly convergent) power series
\begin{equation}
\frac{1}{f(\zeta)}=\sum_{k=0}^{\infty}\sum_{m=0}^{\infty}(e^{-\zeta}+ce^{-\zeta/\tau}-1-c)^{-k-m-1}\zeta^{k}\sigma^{k}\mu^{m}a_{m}^{m+k}\,,\label{eq:finv series}
\end{equation}
with some coefficients $a_{m}$. The substitution of (\ref{eq:finv series})
into (\ref{eq:contourintegral}) and evaluation of the contour integral
generates a double series in $m$ and $k$. Note, however, that the
$m=0$ term is zero because the integrand has the same number of roots
(at $\zeta=0$) in the numerator and the denominator. The doubly infinite
and absolutely convergent power series for $v$ in $\sigma$ and $\mu$
is thus 
\begin{equation}
v=\sum_{k=0}^{\infty}\sum_{m=1}^{\infty}a_{km}\sigma^{k}\mu^{m}\,,\label{eq:vexpansion1}
\end{equation}
 where the $a_{km}$ are coefficients independent of $\sigma$ and
$\mu$. The coefficients $a_{km}$ are computed by first computing
a power series expansion in $\sigma$ and then applying the Lagrange
Inversion Theorem \cite{Abramowitz1972} to obtain $v$ as a function
of $\mu$. To facilitate rearrangement of this series into the form
of (\ref{eq:vexpansion1}), we denote 
\[
f_{2}(w)\equiv f(w)-f(0)\,,
\]
and note that the derivative of $f(w)$ with respect to $\sigma$
is $-w$. Equation (\ref{eq:vexpansion1}) can thus be written (after
the power series expansion in $\sigma$ around zero) as 
\begin{equation}
v=\sum_{k=0}^{\infty}\sum_{m=1}^{\infty}m^{\bar{k}}h_{m,k}\frac{\mu^{m}}{m!}\frac{\sigma^{k}}{k!}\,,\label{eq:veqnsum}
\end{equation}
where $m^{\bar{k}}$ is a rising factorial%
\footnote{A rising factorial $m^{\bar{k}}$ is defined as $m(m+1)\cdots(m+k-1)$
and a falling factorial $m^{\underline{k}}$ is defined as $m(m-1)\cdots(m-k+1)$.
We note that in the combinatorics literature, $m^{(k)}$ is more commonly
used as a rising factorial (also known as the Pochhammer symbol) and
$(m)_{k}$ as a falling factorial. Here we choose the notation {}``$m^{\bar{k}}$''
for rising factorial to prevent confusion later on when using the
superscript/parentheses notation for differentiation. If it also the
notation advocated by Knuth \cite{Knuth1992}.%
}, and $h_{m,k}=\lim_{w\to0}h_{m,k}(w)$ with 
\[
h_{m,k}(w)=\frac{d^{m-1}}{dw^{m-1}}\left(w^{m+k}f_{2}(w)^{-(m+k)}\right)\,.
\]
Straightforward application of the product rule yields that 
\[
h_{m,k}(w)=\sum_{l=0}^{m-1}{m-1 \choose l}\left(\frac{d^{m-1-l}}{dw^{m-1-l}}w^{m+k}\right)\left(\frac{d^{l}}{dw^{l}}f_{2}(w)^{-(m+k)}\right)\,.
\]
 The $l$th derivative of $f_{2}(w)^{-(m+k)}$ is computed using Fa\`a
di Bruno's formula \cite{Johnson2002} for higher derivatives of composite
functions and thus 
\[
h_{m,k}(w)=\sum_{l=0}^{m-1}a_{klm}w^{k+l+1}\sum_{p=0}^{l}b_{kmp}f_{2}(w)^{-(m+k+p)}\mathbb{B}_{l,p}(f_{2}'(w),\ldots,f_{2}^{(l-p+1)}(w))\,,
\]
where 
\begin{eqnarray*}
a_{klm}={m-1 \choose l}\frac{(m+k)!}{(k+l+1)!} & \,,\, & b_{kmp}=(-1)^{p}(m+k)^{\bar{p}}\,,
\end{eqnarray*}
and $\mathbb{B}_{l,p}$ are the partial Bell polynomials \cite{Aldrovandi2001,Bell1927,Bell1934,Wheeler1987}.
Recall that for an arbitrary sequence $\{x_{i}\}$ and $l,p\in\mathbb{N}$,
the partial Bell polynomials are
\[
\mathbb{B}_{l,p}(x_{1},\ldots,x_{l-p+1})=\sum\frac{l!}{j_{1}!j_{2}!\cdots j_{l-p+1}!}\left(\frac{x_{1}}{1!}\right)^{j_{1}}\left(\frac{x_{2}}{2!}\right)^{j_{2}}\cdots\left(\frac{x_{l-p+1}}{(l-p+1)!}\right)^{j_{l-p+1}}\,,
\]
with the sum taken over all sequences $\{j_{1},\ldots,j_{n-k+1}\}\in\mathbb{N}$
such that $\sum_{i=1}^{n-k+1}j_{i}=m$ and $\sum_{i=1}^{n-k+1}ij_{i}=k$
(a simple Diophantine linear system). The sets of indices $\{j_{i}\}$
satisfying these sums are just a way of describing all possible partitions
of a set of size $k$ \cite{Encinas2005}.

In what follows, the argument for the Bell polynomial frequently takes
the form $\{f_{2}^{(1+i)}(0)\}_{i=1}^{n}$. To improve the clarity
of the formulas, we denote 
\[
F_{n}=\{f_{2}^{(1+i)}(0)\}_{i=1}^{n}\,;\, n\in\mathbb{N},
\]
 to be a finite sequence of higher derivatives of $f_{2}$.

Formally taking the limit for $\lim_{w\to0}h_{m,k}(w)$ yields 
\begin{equation}
\frac{\sum_{l=0}^{m-1}\tilde{a}_{klm}\sum_{p=0}^{l}\sum_{q=0}^{2m-2-l}\tilde{b}_{klmpq}\mathbb{B}_{2m-2-l-q,m-1-p}(F_{m-l-q+p})\mathbb{B}_{l,p}^{(q)}(F_{l-p+1})}{f_{2}^{''}(0)^{2m+k-1}}\,,\label{eq:hmkw}
\end{equation}
where 
\begin{eqnarray*}
\tilde{a}_{klm} & = & a_{klm}{2m+k-1 \choose 2m-l-2}(k+1+l)!\,,\\
\tilde{b}_{klmpq} & = & b_{kmp}{2m-2-l \choose q}\frac{(m-1-p)!}{(2m+k-1)!}\,,\,.
\end{eqnarray*}

The evaluation of most terms in (\ref{eq:hmkw}) is computationally
straightforward and efficient, except the term involving $\mathbb{B}_{l,p}^{(q)}$.
For example, the sum in (\ref{eq:veqnsum}) for $m=10$ and $k=100$
can take around one minute on an Intel i7 cpu when using Mathematica
to analytically find the $q$th derivative of $\mathbb{B}_{l,p}$
and then sum the terms. In \cite{Mishkov2000}, Mishkov derived a
formula for $\mathbb{B}_{l,p}^{(q)}$, but to implement it would consists
of solving a cascade of linear Diophantine equations%
\footnote{These have no closed form solution and would thus also need to be
solved.%
} to obtain the correct indices for the sums. 

We will use a recursive property of $\mathbb{B}_{l,p}^{(k)}$ to avoid
having to solve systems of Diophantine equations. Note that for an
arbitrary sequence $X=\{x_{j}\}$ 
\[
\frac{\partial}{\partial x_{j}}\mathbb{B}_{l,p}(X)={l \choose j}\mathbb{B}_{l-j,p-1}(X)\,,
\]
and thus in the context of (\ref{eq:hmkw}) we have that 
\begin{equation}
\mathbb{B}_{l,p}^{(q)}(F_{l-p+1})=\sum_{r_{1}=1}^{l-p+1}{l \choose r_{1}}\sum_{r_{2}=0}^{q-1}{q-1 \choose r_{2}}\mathbb{B}_{l-r_{1},p-1}^{(q-r_{2}-1)}(F_{l-p+1})\,,\label{eq:ddq Blp}
\end{equation}
recursively allows (eventual) computation of the derivatives by evaluation
of the partial Bell polynomials themselves.%
\footnote{For  a good summary of known properties of Bell polynomials, we direct
the interested reader to Aldrovandi \cite[{\S}13 \& Appendix A.5]{Aldrovandi2001}.%
} The use of Mathematica's memoization features for recursive sums
substantially sped up the computation as well, naturally at the expense
of increased memory usage.%
\footnote{Memoization is a technique for reducing the number of function calls
by storing the results of previous calls \cite{Michie1968}. %
}

We now have that the principal branch of the roots of (\ref{eq:lamchar})
is 
\begin{equation}
\lambda=(\ln\gamma_{2}+\ln\tau+\ln(\frac{1}{\ln\gamma_{2}})+v)/\tau\,,\label{eq:lambda principle branch}
\end{equation}
where 

\begin{equation}
v=\sum_{k=0}^{\infty}\sum_{m=1}^{\infty}m^{\bar{k}}h_{m,k}\frac{\mu^{m}}{m!}\frac{\sigma^{k}}{k!}\,.\label{eq:veqnsum-1}
\end{equation}
We note that for $i\geq1$
\[
f_{2}^{(1+i)}(0)=(-1)^{i}(1+c\tau^{-i})\,.
\]
and thus $h_{m,k}$ is defined as 

\begin{equation}
\frac{\sum_{l=0}^{m-1}\sum_{p=0}^{l}\sum_{q=0}^{2m-2-l}\hat{a}_{klmpq}\mathbb{B}_{2m-2-l-q,m-1-p}(F_{m-l-q+p})\mathbb{B}_{l,p}^{(q)}(F_{l-p+1})}{\left(-(1+c\tau^{-1})\right)^{2m+k-1}}\,,\label{eq:hmk}
\end{equation}
 where 
\[
F_{n}=\{(-1)^{n}(1+c\tau^{-n})\}_{i=1}^{n}\,,
\]
\begin{eqnarray*}
\hat{a}_{klmpq} & = & \frac{(-1)^{p}(k+m)\Gamma(m)\Gamma(m-p)\Gamma(k+m+p)}{q!l!\Gamma(2+k+l)\Gamma(-l+m)\Gamma(-1-l+2m-q)}\,,
\end{eqnarray*}
\[
\sigma=\frac{1}{\mbox{Ln}\gamma_{2}}\,,\, c=\left(\frac{\beta_{2}}{\gamma_{2}}\right)\left(\frac{\gamma_{2}\tau}{\mbox{Ln}\gamma_{2}}\right)^{\left(\frac{\tau-1}{\tau}\right)}\,,\,\mu=\frac{\ln\tau+\ln(\nicefrac{1}{\mbox{Ln}\gamma_{2}})}{\mbox{Ln}\gamma_{2}}-c\,,
\]
and recalling that $\mathbb{B}_{l,p}^{(q)}$ is defined recursively
in (\ref{eq:ddq Blp}). 

The full description of the $j$th branch root for the characteristic
equation in (\ref{eq:schar}) is therefore 
\[
s_{j}=\frac{1}{\tau}(\ln_{j}\gamma_{2}+\ln(\frac{1}{\ln_{j}\gamma_{2}})+\ln\tau+v_{j})+\alpha_{1}\,,
\]
 where $\ln_{j}(z)=|z|+\arg(z)+2\pi ij$, $v$ is defined in (\ref{eq:veqnsum-1}),
and the subscript on the $v$ indicates the chosen branch for the
logarithms in $\sigma$, $c$, and $\mu$.

For the convenience of the reader, we have included the Mathematica
code in Appendix \ref{sec:Mathematica-Code} which solves for $s_{j}$
as a function of $\alpha$, $\beta$, $\gamma$, $\tau_{1}$, $\tau_{2}$,
$j$, $m$, and $k$. Lastly, upon request, the author will supply
versions of the code with arguments $\alpha_{1}$, $\beta_{2}$, $\gamma_{2}$,
$\tau$, $j$, $m$, and $k$ as well as versions which obtain significant
speedups by using Mathematica's memoization, compilation, and parallelization
features.

\section{\label{sec:Numerical-Results}Numerical Results}

In what follows, we denote $\hat{s}_{j}$ as the actual root. Where
relevant, the $s_{j}$ values were computed using the damped Newton's
method implemented in Mathematica's \textbf{FindRoot} applied to equation
(\ref{eq:schar}).

\subsection{Convergence}

In this section, we provide numerical results for the estimation of
the eigenvalues. Consider the two lag DDE
\begin{eqnarray}
\dot{x}(t) & = & \alpha x(t)+\beta x(t-\tau_{1})+\gamma x(t-\tau_{2})\,;\, t>0\,,\label{eq:DDE eqn redux}\\
x(t) & = & \phi(t)\,;\, t\leq0\,,\nonumber 
\end{eqnarray}
 system from above with transfer function 
\begin{equation}
(s-\alpha-\beta e^{-s\tau_{1}}-\gamma e^{-s\tau_{2}})^{-1}\,.\label{eq:transfer function}
\end{equation}
We note that the accuracy of our asymptotic expansion rests on the
assumption that (\ref{eq:assump 2}) is small and so we choose 
\begin{equation}
\beta_{2}=0.001\,,\,\gamma_{2}=6\,,\,\tau=4\,,\,\alpha_{1}=-1\,.\label{eq:used pars 2}
\end{equation}
which means that the parameters in (\ref{eq:DDE eqn redux}) are 
\begin{equation}
\alpha=-1\,,;\,\beta\approx3.68\times10^{-4}\,,\,\gamma=0.1989\,,\,\tau_{1}=1\,,\,\tau_{2}=5\,,\label{eq:used pars orig}
\end{equation}
and thus 
\[
\frac{\beta_{2}}{\gamma_{2}^{1/\tau}\ln\gamma_{2}}\approx3.6\times10^{-4}\,.
\]
Numerical investigations suggested that the series converges in $m$
about a factor of four times faster than it converges in $k$. Accordingly,
Figure \ref{fig:convergence in n} depicts the convergence in $m$
with $k=m^{4}$. We note that up to $m=8$, the series converges nicely.
For $m>8$, however, the series exhibits a reduction of accuracy,
likely due to the numerical issues inherent in evaluating such large
series. 
\begin{figure}
\begin{centering}
\includegraphics{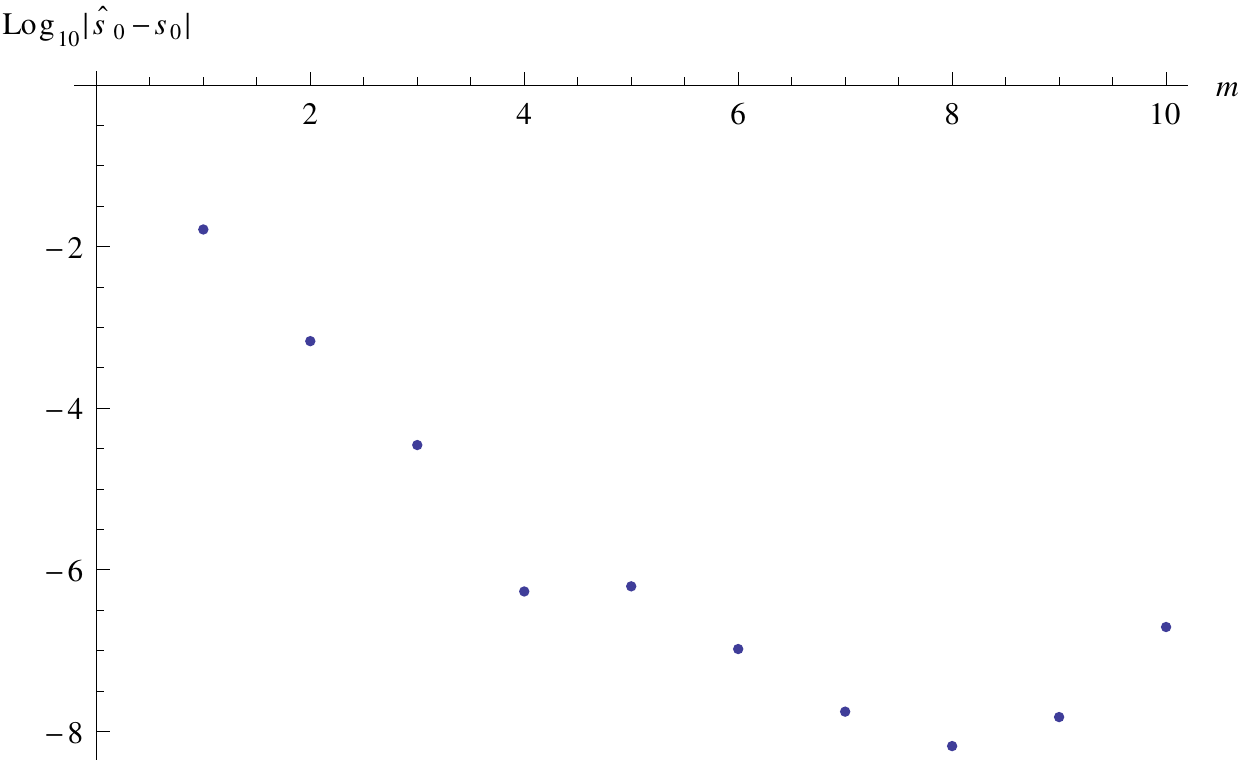}\caption{\label{fig:convergence in n}Plot of $\log_{10}|\hat{s}_{0}-s_{0}|$
for the series approximation to $\hat{s}_{0}$ as a function of $m$
where $k=m^{4}$. The parameters for the two-lag DDE were those from
(\ref{eq:used pars orig}) .}

\par\end{centering}

\end{figure}

Lastly, in subsequent computations of $s_{j}$, we choose to truncate
the series at $m=8$ and $k=1000$ to maintain accuracy. We obtain
the following roots as depicted in Figure \ref{fig:computed eval roots}.
For comparison, Figure \ref{fig:actual evals transfer function} depicts
a contour plot of the transfer function in (\ref{eq:transfer function})
(lighter colors are larger values) and we note that our computed roots
match up nicely with the singularities of the transfer function. 
\begin{figure}
\begin{centering}
\includegraphics[height=2.8in]{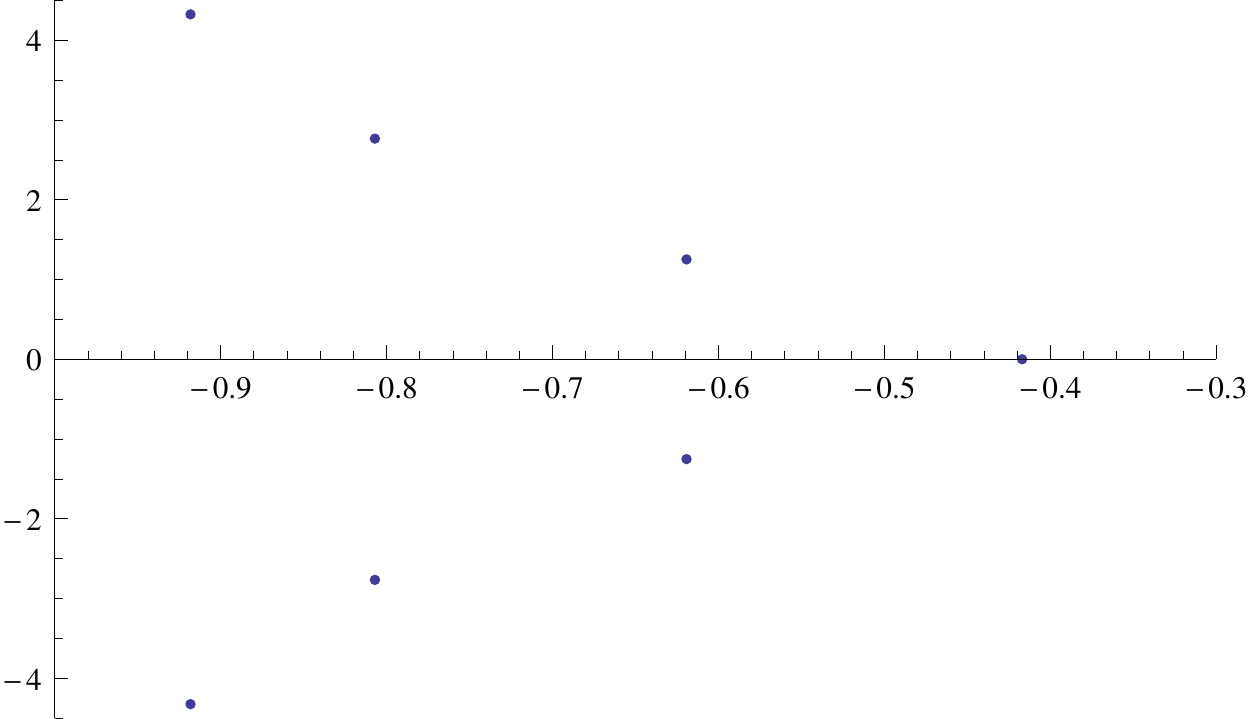}
\par\end{centering}

\caption{\label{fig:computed eval roots}Eigenvalues $s_{j}$ computed using
the expansion in Section (\ref{sec:Two-Lag}). The parameters for
the two-lag DDE were those from (\ref{eq:used pars orig}) .}
\end{figure}
\begin{figure}
\centering{}\includegraphics[height=2.8in]{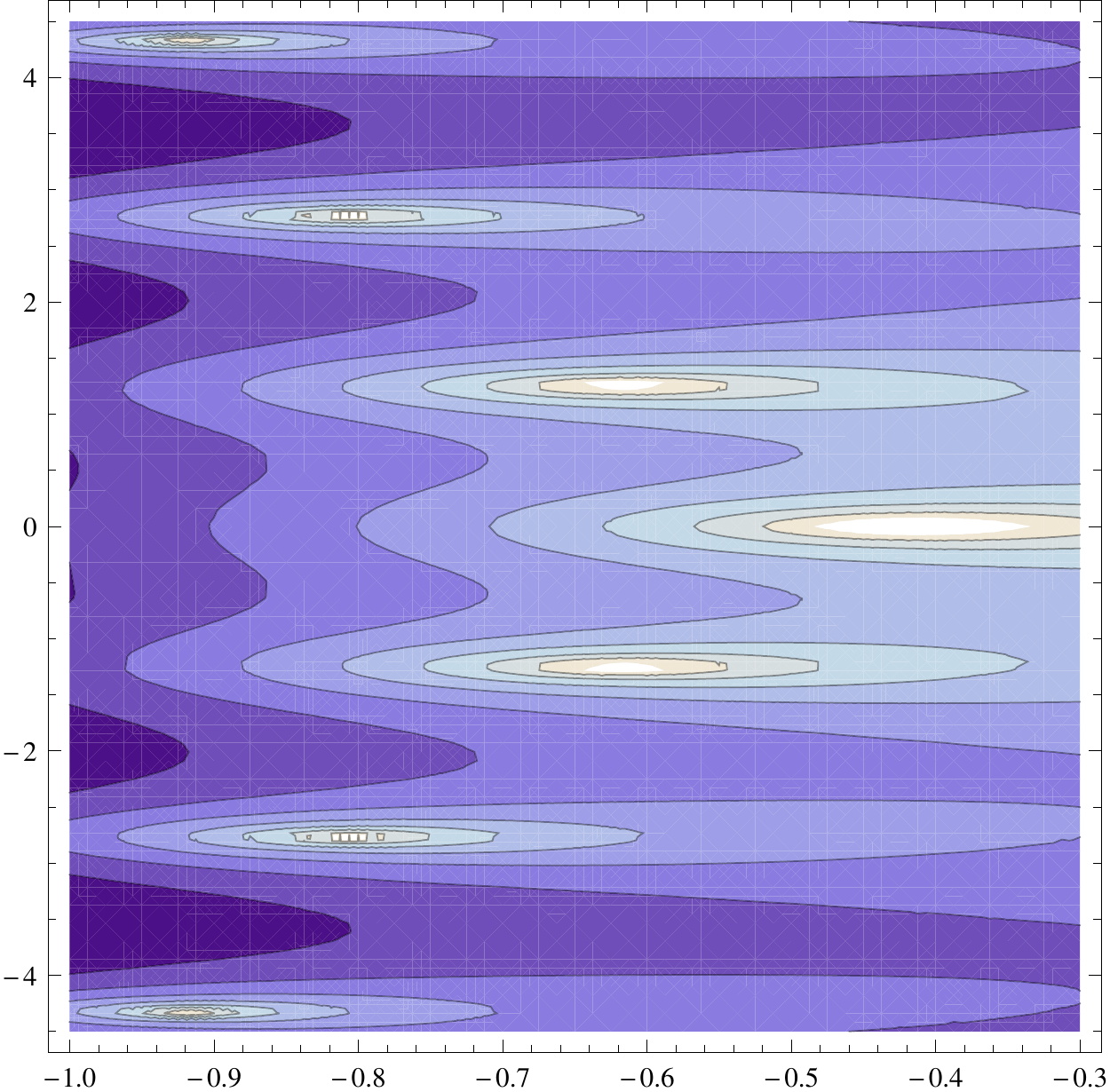}\caption{\label{fig:actual evals transfer function}Contours of $\log_{10}|(s-\alpha-\beta e^{-s\tau_{1}}-\gamma e^{-s\tau_{2}})^{-1}|$
for $c\in\mathbb{C}$. The horizontal axis is the real part of $s$
and the vertical axis is the imaginary part of $s$. Note how the
peaks (lighter regions) coincide with the computed roots in Figure
\ref{fig:computed eval roots}. The parameters for the two-lag DDE
were those from (\ref{eq:used pars orig}).}
\end{figure}

\subsection{\label{sub:Example}Example}

In modeling the life cycle of the Australian blowfly \emph{Lucila
cuprina}, Braddock and van den Driessche \cite{Braddock1983} developed
the following two-lag DDE
\begin{equation}
\frac{dx}{dt}=rx(t)(1-a_{1}x(t-\tau_{1})-a_{2}x(t-\tau_{2}))\,.\label{eq:blowflydde}
\end{equation}
 The positive equilibrium is at $x^{*}=1/(a_{1}+a_{2})$ and a linearization
about this point (shifting to $y=x-x^{*}$) yields the equation
\begin{equation}
\frac{dy}{dt}\approx-rx^{*}a_{1}y(t-\tau_{1})+-rx^{*}a_{2}y(t-\tau_{2})\,.\label{eq:linearized y}
\end{equation}
Theorem 6 in Ruan \cite{Ruan2006} summarizes the results of \cite{Braddock1983}
nicely, providing 3 different sufficiency conditions for the existence
of a value of $\tau_{2}$ which will incur a Hopf bifurcation. We
present here only the last of the possible sufficiency conditions
in the following theorem from \cite{Ruan2006}, since these restrictions
also satisfy our conditions for accurate series expansion.
\begin{theorem}
\label{thm:hopfbifurcation}If $a_{1}=a_{2}$ and $\tau_{1}>(2x^{*}ra_{1})^{-1}$
then there is a $\tau_{2}^{0}>0$ such that when $\tau_{2}=\tau_{2}^{0}$
the two-lag DDE (\ref{eq:blowflydde}) exhibits a Hopf bifurcation
at the equilibrium $x^{*}=1/(a_{1}+a_{2})$.
\end{theorem}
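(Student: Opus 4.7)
The plan is to apply the classical Hopf bifurcation theorem for delay differential equations (see, e.g., Hale and Verduyn Lunel \cite{Hale1993}) to the linearization (\ref{eq:linearized y}) by (i) producing a critical delay $\tau_2^0 > 0$ at which the characteristic equation has a pair of purely imaginary roots $\pm i\omega_0$, and (ii) verifying the transversality condition. Under the hypothesis $a_1 = a_2 =: a$, we have $x^{*} = 1/(2a)$, so $r x^{*} a_1 = r x^{*} a_2 = r/2$, and the characteristic equation collapses to
\[
s + \tfrac{r}{2}\,e^{-s\tau_1} + \tfrac{r}{2}\,e^{-s\tau_2} = 0 .
\]

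For step (i), substitute $s = i\omega$ with $\omega > 0$ and separate real and imaginary parts to obtain $\cos\omega\tau_1 + \cos\omega\tau_2 = 0$ and $\omega = \tfrac{r}{2}(\sin\omega\tau_1 + \sin\omega\tau_2)$. The cosine condition forces $\omega\tau_2 \equiv \pi \pm \omega\tau_1 \pmod{2\pi}$; the minus-sign family collapses the sine condition to $\omega = 0$ and is discarded, while the plus-sign family $\omega\tau_2 = \pi - \omega\tau_1 + 2\pi k$ reduces the sine condition to the single scalar equation
\[
\theta = r\tau_1 \sin\theta, \qquad \theta := \omega\tau_1 .
\]
Noting that $2 x^{*} r a_1 = r$, the hypothesis $\tau_1 > (2 x^{*} r a_1)^{-1}$ is exactly $r\tau_1 > 1$, so $\theta \mapsto r\tau_1 \sin\theta - \theta$ has positive derivative $r\tau_1 - 1$ at the origin and is strictly negative at $\theta = \pi$; the intermediate value theorem therefore supplies a root $\theta_0 \in (0,\pi)$. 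Set $\omega_0 = \theta_0/\tau_1$ and $\tau_2^0 = (\pi - \theta_0)/\omega_0 > 0$.

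For step (ii), implicit differentiation of the characteristic equation in $\tau_2$ yields
\[
\frac{ds}{d\tau_2} = \frac{(rs/2)\,e^{-s\tau_2}}{1 - (r\tau_1/2)\,e^{-s\tau_1} - (r\tau_2/2)\,e^{-s\tau_2}} .
\]
Evaluating at $s = i\omega_0$, $\tau_2 = \tau_2^0$, using the identity $e^{-i\omega_0 \tau_2^0} = -e^{i\omega_0 \tau_1}$ and the relation $\omega_0 = r\sin\theta_0$, a short calculation collapses the real part to a positive multiple of $1 - r\tau_1 \cos\theta_0$. The saving observation is that at $\theta_0$ the function $r\tau_1 \sin\theta - \theta$ is decreasing through zero (it is positive just to the left of $\theta_0$ and negative just to the right), so $r\tau_1 \cos\theta_0 < 1$ automatically, giving $\mathrm{Re}(ds/d\tau_2)|_{\tau_2 = \tau_2^0} > 0$.

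The main obstacle will be arguing that $\tau_2^0$ is genuinely the first critical delay, i.e., that the equilibrium is stable for $\tau_2 \in [0,\tau_2^0)$ so that the crossing at $\tau_2^0$ is a bona fide bifurcation out of stability rather than a later crossing of a non-principal branch. I would handle this by combining (a) the observation that at $\tau_2 = 0$ the equation reduces to a single-lag DDE $s + r/2 + (r/2)e^{-s\tau_1} = 0$ admitting no purely imaginary roots, together with a continuity/limit argument showing that for small $\tau_2$ the principal root lies in the open left half-plane, and (b) the explicit series expansion (\ref{eq:lambda principle branch}) developed in Section \ref{sec:Two-Lag}, which can numerically certify that the principal root is indeed the first to cross at $\tau_2^0$ and that the Hopf conclusion therefore applies at this parameter value.
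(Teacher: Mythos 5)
The paper does not actually prove this statement: Theorem \ref{thm:hopfbifurcation} is quoted from Ruan \cite{Ruan2006} (his Theorem~6), which in turn summarizes the analysis of Braddock and van den Driessche \cite{Braddock1983}; the series expansion of Section \ref{sec:Two-Lag} is used only afterwards to illustrate the bifurcation numerically. Your argument is essentially the standard proof that sits behind that citation, and its core is correct: with $a_{1}=a_{2}$ the characteristic equation of \eqref{eq:linearized y} is $s+\tfrac{r}{2}e^{-s\tau_{1}}+\tfrac{r}{2}e^{-s\tau_{2}}=0$, the reduction to $\theta=r\tau_{1}\sin\theta$ is right (modulo a harmless mislabeling of the ``plus''/``minus'' families --- the surviving family is $\omega\tau_{2}=\pi-\omega\tau_{1}+2\pi k$, which is the formula you actually use), the hypothesis $\tau_{1}>(2x^{*}ra_{1})^{-1}$ is exactly $r\tau_{1}>1$, and $\mathrm{Re}(ds/d\tau_{2})$ at the crossing is indeed a positive multiple of $1-r\tau_{1}\cos\theta_{0}$.

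Three points need tightening. First, the claim that $r\tau_{1}\sin\theta-\theta$ is ``decreasing through zero'' at $\theta_{0}$ is not automatic from the intermediate value theorem; justify it, e.g., by noting that $\theta/\sin\theta$ is strictly increasing on $(0,\pi)$, so the root is unique and at it $r\tau_{1}\cos\theta_{0}-1=(\theta_{0}\cos\theta_{0}-\sin\theta_{0})/\sin\theta_{0}<0$. Second, the Hopf theorem for retarded equations that you invoke (Hale and Verduyn Lunel \cite{Hale1993}) also requires $\pm i\omega_{0}$ to be simple roots and no other root to be an integer multiple of $i\omega_{0}$; you verify neither. Both do hold here: simplicity follows because the denominator in your $ds/d\tau_{2}$ has imaginary part $\tfrac{r}{2}(\tau_{1}+\tau_{2}^{0})\sin\theta_{0}>0$; $s=0$ is not a root since the characteristic function equals $r>0$ there; and using $\omega_{0}\tau_{2}^{0}=\pi-\theta_{0}$ one checks that $ik\omega_{0}$ being a root for $|k|\geq2$ would force either $k\omega_{0}=0$ (even $k$) or $\sin(k\theta_{0})=k\sin\theta_{0}$ (odd $k$), both impossible for $\theta_{0}\in(0,\pi)$. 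Third, your closing paragraph treats ``$\tau_{2}^{0}$ is the first crossing / the equilibrium is stable for $\tau_{2}<\tau_{2}^{0}$'' as the main obstacle and proposes to settle it partly by numerical certification with the expansion of Section \ref{sec:Two-Lag}. That step is not needed for the statement as given --- a Hopf bifurcation at $\tau_{2}=\tau_{2}^{0}$ requires only the simple imaginary pair, non-resonance, and transversality, not prior stability or that the crossing be the first one --- and in any case a numerical certification could not stand as a proof step; dropping that paragraph and adding the verifications above yields a complete argument.
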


To illustrate the match between our expansion and this theory, let
$r=1$, $a_{1}=a_{2}=1$, $\tau_{1}=10$. Figure \ref{fig:Re s0}
depicts the principal root as a function of $\tau_{2}$ and we note
the root at $\tau_{2}=0.379414$. We also note that the relation which
must be small (\ref{eq:assump 2}) is on the order of $10^{-17}$
for this set of parameters. Figure \ref{fig:Plot-of-stable} depicts
asymptotically stable oscillations (solid curve for $\tau_{2}=0.2$)
and unstable oscillations (dashed curve for $\tau_{2}=1.3$). To better
illustrate the stable and unstable behaviors, we arbitrarily choose
an initial history of $\phi(t)=1\,;\, t\leq0$ for the stable solution
and an initial history of $\phi(t)=\nicefrac{1}{10}\,;\, t\leq0$
for the unstable solution. 
\begin{figure}
\begin{centering}
\includegraphics{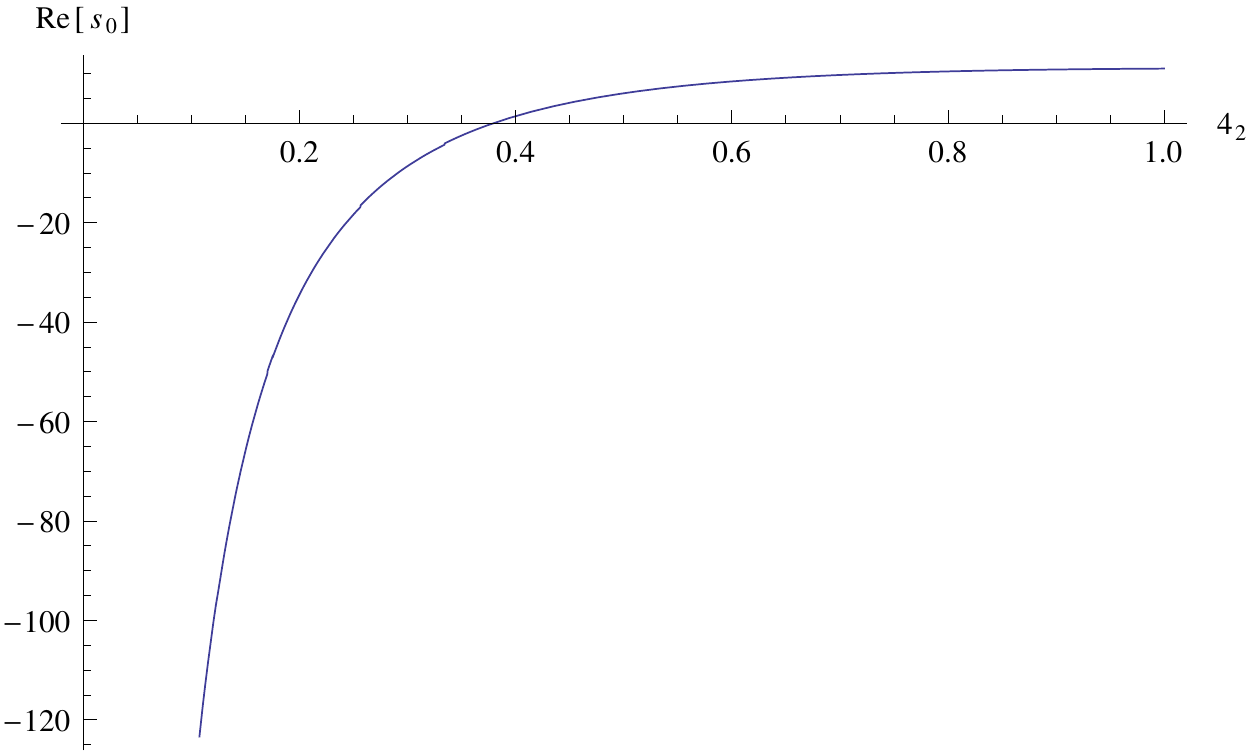}\caption{\label{fig:Re s0}Plot of the $\mbox{Re}[s_{0}]$ as a function of
$\tau_{2}$ for the linearized two-lag blowfly model in (\ref{eq:linearized y}).}

\par\end{centering}

\end{figure}
\begin{figure}
\begin{centering}
\includegraphics{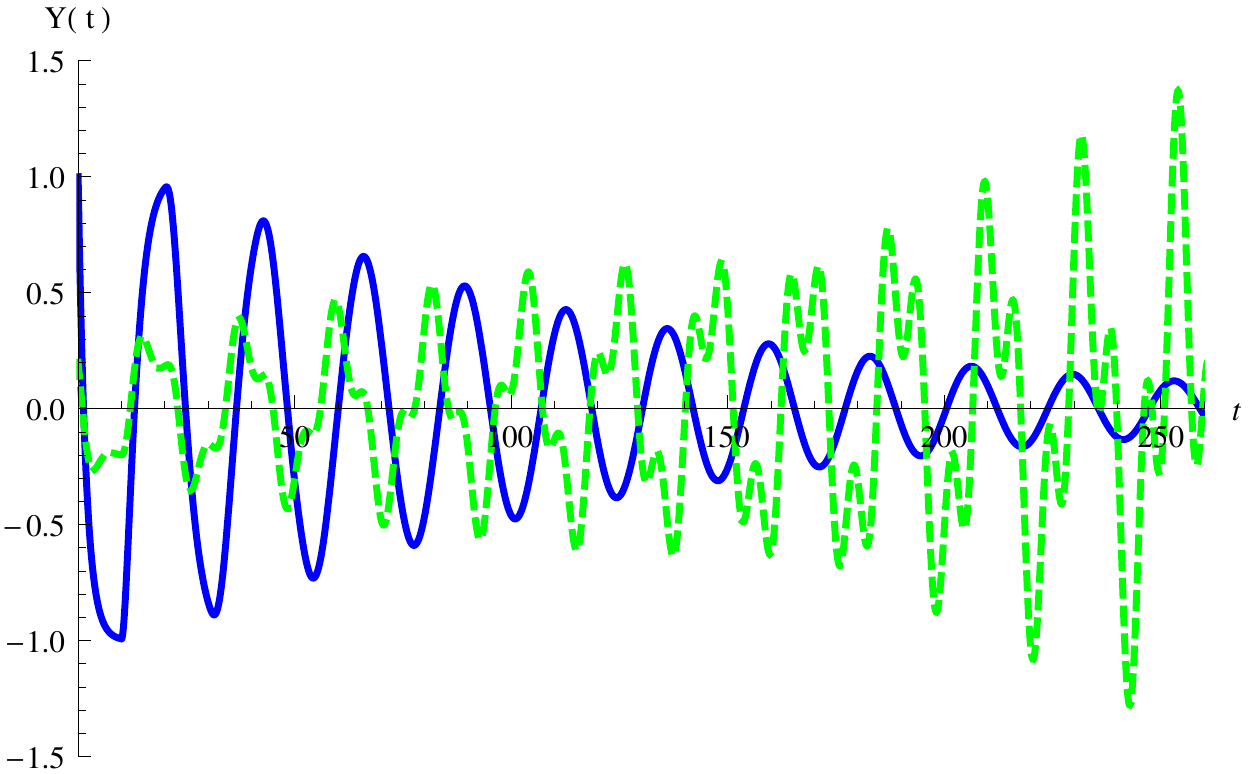}\caption{\label{fig:Plot-of-stable}Plot of stable (solid curve for $\tau_{2}=0.2$)
and unstable (dashed curve for $\tau_{2}=1.3$) solutions to the linearized
two-lag blowfly model in (\ref{eq:linearized y}). For the stable
solution, the initial history is $\phi(t)=1$ for $t\leq0$ and for
the unstable solution, the initial history is $\phi(t)=0.1$ for $t\leq0$.}

\par\end{centering}

\end{figure}

\section{\label{sec:Conclusion-and-Discussion}Conclusion and Discussion}

In this work we have developed an asymptotic expansion for the roots
to a nonlinear eigenvalue problem associated with two-lag DDEs. We
have provided numerical evidence supporting the accuracy of our expansion.
We have also applied the expansion to a two-lag DDE model of the Australian
blowfly, illustrating the Hopf bifurcation transition from stable
to unstable oscillations.

There are many intriguing directions one could take with this work.
Concerning systems of single-lag DDEs, it is known that the approach
presented in Asl and Ulsoy's original article \cite{Asl2003} is only
valid when the coefficient matrices in front of the state and lagged
state commute. This limitation has been discussed in the literature
\cite{Asl2007,Zafer2007} with Jarlebring \cite{Jarlebring2007} also
noting that the matrices need only be triangularizable. Fortunately,
there is an algorithm for working around this problem and the monograph
\cite{Yi2010} has a very clear description of how to solve single-lag
DDE systems using a \emph{matrix }Lambert W function. And a reasonable
extension would be to develop the framework to identify eigenvalues
for two-lag systems of DDEs. Indeed, we have made preliminary efforts
in this direction as it is a natural extension to some of the author's
previous work \cite{BanksBortzHolte2003mbs}.

Another direction for future work would be to consider more general
multi-lag DDEs. We were surprised to discover that the restrictions
placed on the coefficients in the two-lag DDE were satisfied for many
of the DDEs we considered. We therefore plan to investigate if the
coefficient restrictions for an asymptotic expansion in the three-lag
(and higher) DDEs are as generous.

\section{Acknowledgements}

The author wishes to thank Andrew Christlieb, Anna Gilbert, and Peter
Petre for the original discussion which led to the motivating question
for this paper. Portions of this work were performed with support
from DARPA, AFRL, and HRL, under contract FA8650-11-C-7158.

\section{Disclaimer}

The views expressed are those of the author and do not refl{}ect the
offi{}cial policy or position of the Department of Defense or the
U.S. Government. This is in accordance with DoDI 5230.29, January
8, 2009. 

\appendix

\section{\label{sec:Mathematica-Code}Mathematica Code}

Here we present the Mathematica code which was used to generate the
roots $s_{j}$:

\begin{Verbatim}[frame=single]
(*DM Bortz*)
(*dmbortz@colorado.edu*)
(*Mathematical Biology Group*)
(*http://mathbio.colorado.edu*)
(*Applied Mathematics Department*)
(*University of Colorado*)
(*Boulder, CO 80309-0526*)
(*June 2012*)

Clear[Logn, f2p, z, j, w, tau1, tau2, alpha, beta, gamma, k, l, m, n]
Clear[p, q, mmax, kmax, DBellYq, sigma, c, mu, sj, sjMem]
Logn[z_,j_] := Log[Abs[z]] + Arg[z] + 2*Pi*I*j;
f2p[w_, tau1_, tau2_, j_, c_] := (-1)^j*(E^(-w) + 
   c*(1/(E^(w*(tau1/tau2))*(tau2/tau1)^j)));
DBellYq[l_, p_, q_, tau1_, tau2_, c_] := 
   If[l < 0 || p < 0 || (p == 0 && l > 0) || l < p, 0,
      If[q == 0,
         BellY[l, p, Array[f2p[0,tau1,tau2,#1,c]&,{l-p+1}]],
         Sum[Binomial[l, r] * 
            Sum[Binomial[q - 1, s] * 
               DBellYq[l-r,p-1,q-1-s,tau1,tau2,c] * 
               f2p[0, tau1, tau2, 1 + r + s, c],
            {s, 0, q - 1}],
         {r, 1, l - p + 1}]
      ]
   ];
sigma[alpha_,gamma_,tau1_,tau2_,j_] := 
   1/Logn[gamma*tau1*Exp[(-alpha)*tau2],j];
c[alpha_, beta_, gamma_, tau1_, tau2_, j_] := 
   (beta*(Exp[alpha*(tau2 - tau1)]/gamma)) * 
   (gamma*tau2*(Exp[(-alpha)*tau2] / 
      Logn[gamma*tau1*Exp[(-alpha)*tau2], j]))^(1 - tau1/tau2);
mu[alpha_, beta_, gamma_, tau1_, tau2_, j_] := 
   (Log[tau2/tau1] + Log[1/Logn[gamma*tau1*Exp[(-alpha)*tau2], j]]) / 
   Logn[gamma*tau1*Exp[(-alpha)*tau2], j] - 
   c[alpha, beta, gamma, tau1, tau2, j];
sj[alpha_, beta_, gamma_, tau1_, tau2_, j_, mmax_, kmax_] :=     
   (Logn[gamma*tau1*Exp[(-alpha)*tau2], j] + Log[tau2/tau1] - 
    Log[Logn[gamma*tau1*Exp[(-alpha)*tau2], j]] +
    Sum[Sum[(Sum[Sum[Sum[
      (-1)^p Pochhammer[m,k]*((k + m)*Gamma[m]*Gamma[m-p] * 
      Gamma[k+m+p])/(l!*q!*Gamma[2+k+l]*Gamma[-l+m] * 
      Gamma[-1 - l + 2*m - q]) * 
       BellY[2*m - 2 - l - q, m - 1 - p,
          Array[f2p[0,tau1,tau2,#1,c[alpha,beta,gamma,tau1,tau2,j]]&,
          {Max[0, m - l - q + p]}]] * 
          DBellYq[l,p,q,tau1,tau2,c[alpha,beta,gamma,tau1,tau2,j]],
       {q, 0, 2*m - 2 - l}], {p, 0, l}], {l, 0, m - 1}] / 
          (f2p[0, tau1, tau2, 1, 
             c[alpha, beta, gamma, tau1, tau2, j]]^(2*m + k - 1))) * 
          mu[alpha, beta, gamma, tau1, tau2, j]^m * 
          (sigma[alpha, gamma, tau1, tau2, j]^k/m!/k!),
   {m, 1, mmax}], {k, 0, kmax}]) / (tau2/tau1) + 
   alpha*tau1;
\end{Verbatim}

\bibliographystyle{abbrv}
\bibliography{library}

\end{document}